\documentclass{article}
\usepackage[nosumlimits]{amsmath}
\usepackage{graphicx}
\usepackage{amssymb,amsthm}
\def\MM#1{\boldsymbol{#1}}
\newcommand{\pp}[2]{\frac{\partial #1}{\partial #2}}

\def\MM#1{\boldsymbol{#1}}
\DeclareMathOperator{\diff}{d}

\DeclareMathOperator{\CG}{CG}
\DeclareMathOperator{\DG}{DG}
\DeclareMathOperator{\BDM}{BDM}
\DeclareMathOperator{\RT}{RT}
\DeclareMathOperator{\BDFM}{BDFM}
\DeclareMathOperator{\B}{B}
\usepackage{amscd}
\usepackage{natbib}
\bibliographystyle{elsarticle-harv}

\newtheorem{theorem}{Theorem}
\newtheorem{definition}[theorem]{Definition}
\newtheorem{proposition}[theorem]{Proposition}
 
\usepackage[top=2.5cm, bottom=2.5cm, left=3.5cm, right=3.5cm]{geometry}
\usepackage{fancybox}
\begin{document}
\title{Compatible finite element methods for numerical weather prediction}
\author{C. J. Cotter and A. T. T. McRae}
\maketitle

\begin{abstract}
  This article takes the form of a tutorial on the use of a particular
  class of mixed finite element methods, which can be thought of as
  the finite element extension of the C-grid staggered finite
  difference method. The class is often referred to as compatible
  finite elements, mimetic finite elements, discrete differential
  forms or finite element exterior calculus. We provide an elementary
  introduction in the case of the one-dimensional wave equation,
  before summarising recent results in applications to the rotating
  shallow water equations on the sphere, before taking an outlook
  towards applications in three-dimensional compressible dynamical
  cores.
\end{abstract}

\section{Introduction}
Mixed finite element methods are a generalisation of staggered finite
difference methods, and are intended to address the same problem:
spurious pressure mode(s) observed in the finite difference A-grid and
are also observed in finite element methods when the same finite
finite element method, different finite element spaces are selected
for different variables. The vast range of available finite element
spaces is both a blessing and a curse, and many different combinations
have been proposed, analysed and used for large scale geophysical
fluid dynamics applications, particularly in the ocean modelling
community
\cite{Ro+2005,LeRoPo2007,LePo2008,Da2010,CoHaPa2009,CoHa2011,rostand2008raviart,le2012spurious,comblen2010practical},
whilst many other combinations have been used in engineering
applications where different scales and modelling aspects are
important.

In this article we limit the scope considerably by discussing only one
particular family of mixed finite element methods, known variously as
compatible finite elements (the term that we shall use here), mimetic
finite elements, discrete differential forms or finite element
exterior calculus. These finite element methods have the important
property that differential operators such as grad and curl map from
one finite element space; these embedding properties lead to discrete
versions of the div-curl and curl-grad identities of vector
calculus. This echoes the important ``mimetic'' properties of the
C-grid finite difference method, discussed in full generality on
unstructured grids, and shown to be highly relevant and useful in
geophysical fluid dynamics applications, in
\cite{ThRiSkKl2009,RiThKlSk2010}. This generalisation of the C-grid
method is very useful since it allows (i) use arbitrary grids, with no
requirement of orthogonal grids, without loss of
consistency/convergence rate; (ii) extra flexibility in the choice of
discretisation to optimize the ratio between global velocity degrees
of freedom (DoFs) and global pressure DoFs to eliminate spurious mode
branches; and (iii) the option to increase the consistency/convergence
order.

Compatible finite element methods were first identified in the 1970s,
and quickly became very popular amongst numerical analysts since this
additional mathematical structure facilitated proofs of stability and
convergence, and provided powerful insight. These results were
collected and unified in \cite{BrFo1991}, an excellent book which has
been out of print for a long time, but a new addition has recently
appeared \cite{boffi2013mixed}. These methods have become the standard
tool for groundwater modelling using Darcy's law
\cite{allen1985mixed}, and have also become very popular for solving
Maxwell's equations \cite{bossavit1988whitney,hiptmair2002finite}
where a mathematical structure based on differential forms was
developed by Bossavit, together with the term ``discrete differential
forms''. This structure was enriched, extended and unified under the
term ``finite element exterior calculus'' by Douglas Arnold and
collaborators \cite{ArFaWi2006,ArFaRa2010}, who used the framework to
develop new stable discretisations for elasticity. This has produced a
rich and beautiful theory in the language of exterior calculus,
however, in this article we shall only use standard vector calculus
notation.

In geophysical applications, the stability properties of compatible
finite elements have long been recognised, leading to various choices
being proposed and analysed on triangular meshes
\cite{WaCa1998,rostand2008raviart}. However, no explicit use was made
of the compatible structure beyond stability until
\cite{cotter2012mixed}, which proved that all compatible finite
element methods have exactly steady geostrophic modes; this is
considered a crucial property for numerical weather prediction
\cite{StTh2012}. They also showed that the global DoF balance between
velocity and pressure in compatible finite element discretisations is
crucial in determining the existence of spurious mode branches. If
there are less than 2 velocity DoFs of freedom per pressure DoF, then
there will be branches of spurious inertia-gravity wave branches. This
occurs in the RT0-DG0 (lowest order Raviart-Thomas space for velocity,
and piecewise constant for pressure) on triangles, as demonstrated by
Danilov in \cite{Da2010}. In fact, if the Rossby radius is not
resolved\footnote{The barotropic Rossby radius is rarely resolved in
  ocean models, and the higher baroclinic Rossby radii (there are
  $n-1$ of these in an $n$-layer model) get smaller and smaller. This
  means that there are unresolved Rossby radii in all numerical
  weather prediction models.}, then the spurious and physical branches
merge leading to disasterous results. On the other hand, if the
velocity/pressure global DoF ratio is greater than 2:1, there will be
spurious Rossby modes similar to those observed for the hexagonal
C-grid \cite{Th08}. \cite{cotter2012mixed} identified the BDFM1-DG1
spaces (1st Brezzi-Douglas-Fortin-Marini space for velocity, linear
discontinuous space for pressure) on triangles, and the RTk-DG1 ($k$th
Raviart-Thomas space for velocity, $k$th order discontinuous space for
pressure) on quadrilaterals as having potential for numerical weather
prediction, due to their exact global 2:1 DoF ratio.

Recently, there has been some further developments in the application
of compatible finite element methods to the nonlinear rotating shallow
water equations on the sphere, as part of the UK Dynamical Core ``Gung
Ho'' project. An efficient software implementation of the compatible
finite element spaces on the sphere was provided in \cite{Ro+2013},
whilst an energy-enstrophy conserving formulation analogous to the
energy-enstrophy conserving C-grid formulation of \cite{ArLa1981} was
provided in \cite{McRaCo2014}. A discussion of these properties in the
language of finite element exterior calculus was provided in
\cite{CoTh2014}.

In this article we provide an elementary introduction to compatible
finite element methods, for a reader equipped with knowledge of vector
calculus, and calculus of variations. We begin with a simple linear
one-dimensional example in Section \ref{1d}. We then discuss
applications to the linear and nonlinear shallow-water equations in
Section \ref{sw}. Finally, we provide a glimpse of current work
developing three dimensional formulations for numerical weather
prediction in Section \ref{outlook}.

\section{One dimensional formulation}
\label{1d}
In this section we develop the compatible finite element method in the
context of the one-dimensional scalar wave equation on the domain
$[0,L]$ with periodic boundary conditions,
\begin{equation}
\label{eq:wave eqn}
h_{tt} - h_{xx} = 0, \quad h(0,t)=h(L,t).
\end{equation}
It is more relevant to issues arising in the shallow water equations,
and beyond, to split this equation into two first order equations,
in the form
\begin{equation}
\label{eq:dual wave eqn}
u_t + h_x = 0, \quad h_t + u_x = 0, \quad 
h(0,t)=h(L,t),\,u(0,t)=u(L,t).
\end{equation}
In this section, we shall discretise Equation \eqref{eq:dual wave
  eqn} in space using compatible finite element methods.

In general, the finite element method is based on two key ideas: (i)
the approximation of the numerical solution by functions from some
chosen finite element spaces, and (ii) the weak form. We shall
motivate the latter by discussing the former in the context
of Equation \eqref{eq:dual wave eqn}.
\begin{definition}[Finite element space (on a one-dimensional domain)]
  We partition the interval $[0,L]$ into $N_e$ non-overlapping
  subintervals, which we call \emph{elements}; the partition is called
  a \emph{mesh}. We shall call the point shared by two neightbouring
  elements a \emph{vertex}. A finite element space is a collection of
  functions on $[0,L]$ which are:
  \begin{enumerate}
  \item polynomials of some specified maximum degree $p$
    when restricted to each element $e$, and
  \item have some specified degree of continuity (discontinuous,
    continuous, continuous derivative, \emph{etc}.)
  \end{enumerate}
\end{definition}
The most common options for continuity are continuous functions, in
which case we name the finite element space $\CG(p)$ for given $p$, and
discontinuous functions, in which case we name the finite element
space $\DG(p)$ (higher order continuity finite element spaces are more
exotic, B-splines for example, and we shall not discuss them here).
An example function from the $\CG1$ space and an example function from
the $\DG0$ space are shown in Figure \ref{p11d}. We use the term finite
element space since the collection of functions form a vector space
(\emph{i.e.}, they may be added together and scaled by real numbers,
and addition and scaling satisfy the required properties of a vector
space). This makes finite element spaces amenable to the tools of
linear algebra. We also note that finite element spaces are finite
dimensional. This makes them amenable to calculation on a computer.

\begin{figure}
\includegraphics[width=8cm]{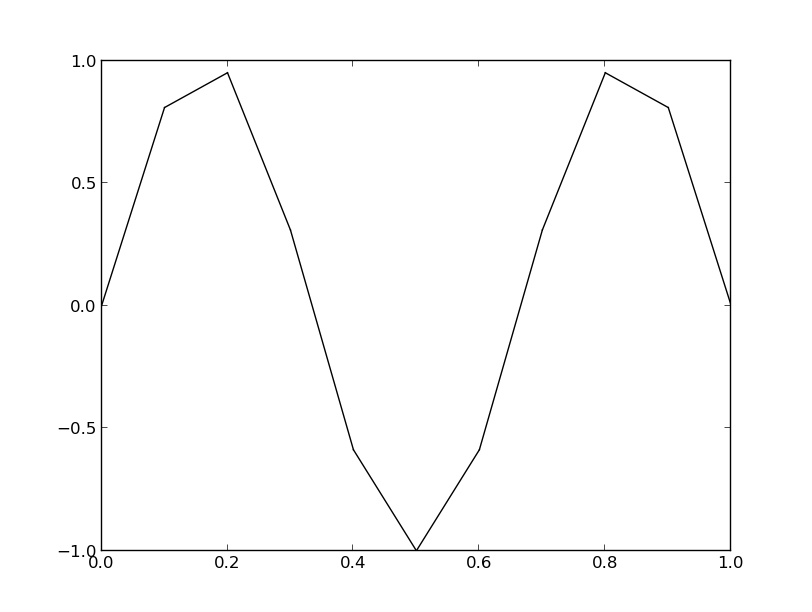}
\includegraphics[width=8cm]{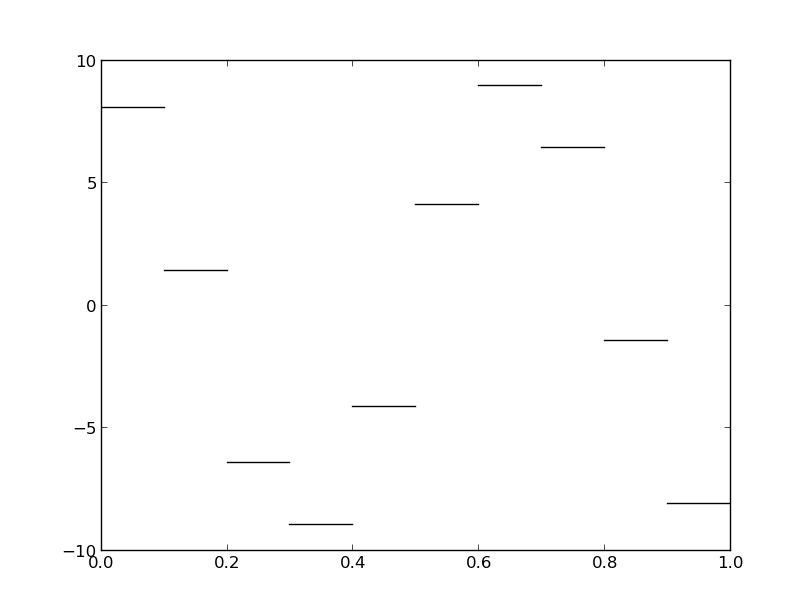}
\caption{\label{p11d}Example finite element functions for a
  subdivision of the domain $[0,1]$ into 10 elements.  Left: A
  function from the $\CG1$ finite element space.  Right: A function from
  the $\DG0$ finite element space.}
\end{figure}

We now proceed to discretise Equation \eqref{eq:dual wave eqn}. We
would like to restrict both $h$ and $u$ to finite element spaces, let
us say $\CG1$ for the purposes of this discussion (we will use the
notation $u\in CG1$ to mean that $u$ is a function in the finite
element space $\CG1$). Clearly we do not obtain solutions of
\eqref{eq:dual wave eqn}, since if $u\in\CG1$ then $u_x\in \DG0$. (In
fact, we obtained the $\DG0$ function in Figure \ref{p11d} by taking
the derivative of the $\CG1$ function.) Hence, we choose to find the
best possible approximation to \eqref{eq:dual wave eqn} by minimising
the magnitude of $u_t+h_x$ and $h_t+u_x$ whilst keeping $u$ and $h$ in
$\CG1$. To do this, we need to choose a way of measuring the magnitude
of functions (\emph{i.e.}, a norm), of which the $L^2$ norm, given by
\begin{equation}
\|u\|_{L^2} = \sqrt{\int_0^L u^2 \diff x},
\end{equation}
is the most natural and computationally feasible. Our finite
element approximation becomes\footnote{The factors of $1/2$ are
not significant but they simplify the subsequent equations.}
\begin{equation}
\label{eq:optimisation}
\min_{u_t\in \CG1}\frac{1}{2}\|u_t+h_x\|^2, \quad
\min_{h_t\in \CG1}\frac{1}{2}\|h_t+u_x\|^2.
\end{equation}
The standard calculus of variations approach to finding the minimiser
$u_t$ for the first of Equations \eqref{eq:optimisation} follows from
noting if $u_t$ is optimal, then infinitesimal changes in $u_t$ do not
change the value of $\|u_t+h_x\|^2$. This is expressed mathematically
as
\begin{equation}
\lim_{\epsilon \to 0} \frac{ \frac{1}{2}\|u_t+\epsilon w +h_x\|^2
- \frac{1}{2}\|u_t + h_x\|^2}{\epsilon},
\end{equation}
for any $w\in \CG1$ (we adopt the notation $\forall w \in \CG1$). We
obtain
\begin{align}\nonumber
  0 & = \lim_{\epsilon\to0}
\frac{1}{2}\|u_t+\epsilon w + h_x\|^2 - \frac{1}{2}\|u_t+h_x\|^2, \\
\nonumber  & = \frac{1}{2}\int_0^L (u_t+\epsilon w + h_x)^2\diff x 
  - \frac{1}{2}\int_0^L (u_t+h_x)^2 \diff x, \\
\nonumber & = \frac{1}{2}\int_0^L (u_t + h_x)^2 + 2\epsilon w(u_t+h_x)\diff x 
  - \frac{1}{2}\int_0^L (u_t+h_x)^2 \diff x, \\
 & = \int_0^L w(u_t+h_x)\diff x, \quad \forall w \in \CG1.
\label{eq:up1p1}
\end{align}
An identical calculation follows for the $h_t$ equation, and we 
obtain
\begin{equation}
\label{eq:pp1p1}
0 = \int_0^L \phi(u_t+h_x)\diff x, \quad \forall \phi \in \CG1.
\end{equation}
We refer to $w$ and $\phi$ as \emph{test functions}. Note that
Equations (\ref{eq:up1p1}-\ref{eq:pp1p1}) can be directly obtained by
multiplying Equations \eqref{eq:dual wave eqn} by test functions $w$
and $\phi$, and integrating over the domain; the minimisation process
is only a theoretical tool to explain that it is the best possible
approximation using the finite element space. Since these equations
represent the error-minimising approximations of Equations
\eqref{eq:dual wave eqn} with $u\in \CG1$, $p\in \CG1$, we call them
the \emph{projections} of Equations \eqref{eq:dual wave eqn} onto
$\CG1$. In general, the projection of an function or an equation onto
a finite element space is called \emph{Galerkin projection}.

Equations (\ref{eq:up1p1}-\ref{eq:pp1p1}) can be implemented
efficiently on a computer by expanding $w$, $\phi$, $u$ and $h$ in a
basis over $\CG1$,
\begin{equation}
u(x) = \sum_{i=1}^n N_i(x)u_i, \quad h(x) = \sum_{i=1}^n N_i(x)h_i, \quad
w(x) = \sum_{i=1}^n N_i(x)w_i, \quad \phi(x) = \sum_{i=1}^n N_i(x)\phi_i,
\end{equation}
with real valued basis coefficients $u_i$, $w_i$, $h_i$,
$\phi_i$. These basis coefficients are still functions of time since
we have not discretised in time yet. In general, in one dimension it
is always possible to find a basis for $\CG(p)$ and $\DG(p)$ finite
element spaces such that the basis functions $N_i$ are non-zero in at
most two (neighbouring) elements. For details of the construction of
basis functions for $\CG$ and $\DG$ spaces of arbitrary $p$, see
\cite{KaSh2005}. Substitution of these basis expansions into Equations
(\ref{eq:up1p1}-\ref{eq:pp1p1}) gives
\begin{equation}
\MM{\rm w}^T\left(M\dot{\MM{\rm u}}+D{\MM{\rm h}}\right)=0, \quad
\MM{\phi}^T\left(M\dot{\MM{\rm h}}+D{\MM{\rm u}}\right)=0,
\label{eq:bilinear form}
\end{equation}
where $M$ and $D$ are matrices with entries given by
\begin{equation}
M_{ij} = \int_0^L N_i(x)N_j(x)\diff x, \quad
D_{ij} = \int_0^L N_i(x)\pp{N_j}{x}(x)\diff x,
\end{equation}
and $\MM{\rm u}$, $\MM{\rm h}$, $\MM{\rm w}$ and $\MM{\phi}$ are
vectors of basis coefficients with $\MM{\rm u}=(u_1,u_2,\ldots,u_n)$
\emph{etc.} Equations \eqref{eq:bilinear form} must hold for all test
functions $w$ and $\phi$, and therefore for arbitrary coefficient
vectors $\MM{\rm w}$ and $\MM{\phi}$. Therefore, we obtain the
matrix-vector systems,
\begin{equation}
\label{eq:matvec}
M\dot{\MM{\rm u}}+D{\MM{\rm h}}=0, \quad
M\dot{\MM{\rm h}}+D{\MM{\rm u}}=0.
\end{equation}
Having chosen a basis where each basis function vanishes in all but
two elements, the matrices $M$ and $D$ are extremely sparse and hence
can be assembled efficiently. For details of the efficient assembly
process, see \cite{KaSh2005}. Furthermore, the matrix $M$ is
well-conditioned and hence can be cheaply inverted using iterative
methods \cite{wathen1987realistic}. It remains to integrate Equations
\eqref{eq:matvec} using a discretisation in time. A generalisation of
this approach is used for all of the finite element methods that we
describe in this paper.

One key problem with Equations (\ref{eq:up1p1}-\ref{eq:pp1p1}) is that
of spurious modes. For example, if we use a regular grid of $N_e$
elements of the same size, if $h$ is a $\CG1$ ``zigzag'' function that
alternates between $1$ and $-1$ between each vertex, then $h_x$ is a
$\DG0$ ``flip-flop'' function that takes the value $\Delta x$ and
$-\Delta x$ in alternate elements, where $\Delta x =
N_e/L$. Multiplication by a $\CG1$ test function $w$ and integrating
then gives zero for arbitrary $w$. The easiest way to understand why
is to choose $w$ to be a hat-shaped basis function that is equal to 1
at a single vertex, and 0 at all other vertices. Then the integral of
$w$ multiplied by $h_x$ is a (scaled) average of $h_x$ over two
elements, which is equal to zero. Since all $w$ can be expanded in
basis functions of this form, we obtain $D\MM{\rm w}$ in every
case. This is a problem because our original zigzag function is very
oscillatory, and so the approximation of the derivative should be
large. In general, using the same finite element space for $u$ and $h$
leads to the existence of spurious modes which have very small
numerical derivatives, despite being very oscillatory, and hence
propagate very slowly. When nonlinear terms are introduced, these
modes get coupled to the smooth part of the function, and grow
rapidly, making the numerical scheme unusable.

In finite difference methods, this problem is avoided by using
staggered grids, with different grid locations for $u$ and $h$. In
finite element methods, the analogous strategy is to choose different
finite element spaces for $u$ and $h$. This is referred to as a
\emph{mixed finite element method}. We shall write $u \in V_0$, $h \in
V_1$ and discuss different choices for $V_0$ and $V_1$. In particular,
we shall choose $V_0=\CG1$ and $V_1=\DG0$, together with the
higher-order extensions $V_0=\CG(p)$ and $V_1=\DG(p-1)$, for some
chosen $p>1$. The reason for doing this is that if $u\in V_0$, then
$u_x\in V_1$: this is because $u$ is continuous but can have jumps in
the derivative, and differentiation reduces the degree of a polynomial
by 1. We say that the finite element spaces $V_0$ and $V_1$ are
\emph{compatible} with the $x$-derivative. This choice means that
$h_t+u_x\in V_1$ and there is no approximation in writing that
equation. Put another way, the Galerkin projection in Equation
\ref{eq:pp1p1} is ``trivial'', \emph{i.e.} it does not change the
equation.

To write down our compatible finite element method we have one further
issue to address, namely that $h\in V_1$ is discontinuous, and so
$h_x$ is not globally defined. This is dealt with by integrating the
$h_x$ term by parts in the finite element approximation, and we obtain
\begin{align*}
\int_0^L wu_t - w_xh \diff x &= 0, \quad \forall w\in V_0, \\
\int_0^L \phi(h_t + u_x)\diff x &= 0, \quad \forall \phi\in V_1.
\end{align*}
There is no boundary term arising from integration by parts due to the
periodic boundary conditions. Three out of the four terms in these two
equations involve trivial projections that do nothing, the $u_t$,
$h_t$ and $u_x$ terms. This means that they introduce no further
errors beyond approximating the initial conditions in the finite
element spaces. The only term that we have to worry about is the
discretised $h_x$ term, where we would like to convince ourselves that
there are no spurious modes. This is done by showing that the
following mathematical condition holds.
\begin{definition}[inf-sup condition]
The spaces $V_0$ and $V_1$ satisfy the inf-sup condition\footnote{Here
  sup is short for \emph{supremum}, which is the maximum value, roughly
  speaking.}  if there exists a constant $C>0$, independent of the
choice of mesh, such that
\begin{equation}
\sup_{w\in V_0, \, w\ne 0}\frac{\left|\int_0^L w_x h \diff x\right|}{\|w_x\|_{L^2}}
 \geq
C\|h\|_{L^2},
\end{equation}
for all non-constant $h\in V_1$.
\end{definition}
This prevents spurious modes because it says that for any non-constant
$h$, there exists at least one $w$ such that the integral is
reasonably large in magnitude compared to the size of $w_x$ and
$h$. In general, proving the inf-sup condition for mixed finite
element methods is a fairly technical business (see \cite{AuBrLo2004}
for a review). However, for our compatible finite element
discretisation it is completely straightforward.
\begin{proposition}[inf-sup condition for compatible finite elements]
Let $V_0$ and $V_1$ be chosen such that if $h\in V_1$ is non-constant
then we can find $w\in V_0$ such that $w_x=h$. Then the inf-sup
condition is satisfied, with $C=1$.
\end{proposition}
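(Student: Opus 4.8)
The plan is to read the hypothesis as directly furnishing the test function needed to evaluate the supremum, so that the inf-sup quotient collapses to $\|h\|_{L^2}$ exactly. Fix an arbitrary non-constant $h \in V_1$. Being non-constant, $h$ is not identically zero, so $\|h\|_{L^2} > 0$; this makes the right-hand side of the inf-sup inequality strictly positive and legitimises the divisions that follow. By hypothesis there is some $w \in V_0$ with $w_x = h$. This $w$ is an admissible competitor in the supremum: $w \ne 0$ because $w_x = h \ne 0$, and moreover $\|w_x\|_{L^2} = \|h\|_{L^2} > 0$, so the quotient at this particular $w$ is well defined.

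The next step is simply to substitute. For this choice of $w$,
\[
\frac{\left|\int_0^L w_x h \diff x\right|}{\|w_x\|_{L^2}}
= \frac{\int_0^L h^2 \diff x}{\|h\|_{L^2}}
= \frac{\|h\|_{L^2}^2}{\|h\|_{L^2}}
= \|h\|_{L^2}.
\]
Since the supremum over all admissible $w \in V_0$ is at least the value it attains at this one choice,
\[
\sup_{w \in V_0,\, w \ne 0} \frac{\left|\int_0^L w_x h \diff x\right|}{\|w_x\|_{L^2}} \ \geq\ \|h\|_{L^2},
\]
which is precisely the inf-sup condition with $C = 1$. As $h$ was an arbitrary non-constant element of $V_1$, this finishes the argument.

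There is essentially no hard step here: the only care required is the bookkeeping of nonvanishing — that the $w$ handed to us genuinely lies in the set over which the supremum is taken, and that $\|w_x\|_{L^2} \ne 0$ so that no denominator degenerates — and both follow immediately from $h$ being non-constant. It is worth emphasising that the display above is an \emph{equality} at the chosen $w$, not merely an inequality, which is why the optimal constant $C = 1$ is attained exactly; by Cauchy--Schwarz in the reverse direction one in fact gets that the supremum equals $\|h\|_{L^2}$, though only the lower bound is needed. The content that is \emph{not} addressed by the proposition as stated is the verification of the hypothesis itself for the concrete pair $V_0 = \CG(p)$, $V_1 = \DG(p-1)$ on the periodic interval: one would build the antiderivative of a given piecewise polynomial element by element, fixing the integration constants to enforce continuity at the vertices, with periodicity requiring $\int_0^L h \diff x = 0$ so that one works within the mean-zero subspace of $V_1$. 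That construction is separate from the estimate proved above.
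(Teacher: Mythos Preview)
Your proof is correct and follows essentially the same approach as the paper: pick $w'\in V_0$ with $w'_x=h$, evaluate the quotient at this $w'$ to get exactly $\|h\|_{L^2}$, and bound the supremum from below by this value. Your additional bookkeeping on nonvanishing and the remarks on Cauchy--Schwarz and the concrete $\CG(p)/\DG(p-1)$ construction are extra commentary rather than a different argument.
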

\begin{proof}
  For any non-constant $h$, take $w'$ such that $w'_x=h$ (which is
  possible by the assumption of the proposition). Then
\begin{equation}
\sup_{w\in V_0}\frac{\left|\int_0^L w_x h \diff x\right|}
{\|w_x\|_{L^2}}
 \geq
\frac{\left|\int_0^L w'_x h \diff x\right|}
{\|w'_x\|_{L^2}}
 =
\frac{\|h\|^2_{L^2}}{\|h\|_{L^2}} = \|h\|_{L^2}.
\end{equation}
\end{proof}
The assumption of the proposition is true whenever $V_0=\CG(p)$,
$V_1=\DG(p-1)$. This means that our compatible finite element methods
will be free from spurious modes. This concludes our discussion of the
one-dimensional case; in the next section we discuss the construction
of two-dimensional compatible finite element spaces and their
application to the shallow water equations.

\section{Application to linear and nonlinear shallow water equations
  on the sphere}
\label{sw}
In one dimension, a compatible finite element method was described for
the wave equation, by choosing two finite element spaces $V_0$ and
$V_1$, such that (i) if $u\in V_0$, then $u_x\in V_1$; and (ii) if
$p\in V_1$ then there is $u \in V_0$ such that $u_x=p$. We represent
this structure by the following diagram.
\begin{equation*}
  \begin{CD}
   \underbrace{{V}_0}_{\mbox{\tiny Continuous}}@>\partial_x>>
    \underbrace{{V}_1}_{\mbox{\tiny Discontinuous}}
  \end{CD}
\end{equation*}
Recall that functions in $V_0$ are continuous, which allows 
the derivative to be globally defined. 

In two dimensions, we take our computational domain, denoted $\Omega$,
to be a rectangle in the plane with periodic boundary
conditions\footnote{The construction here can be extended to the
  surface of a sphere, or in fact any two-dimensional manifold
  embedded into $\mathbb{R}^3$. For details see \cite{Ro+2013}.}, and
partition it into either triangular or quadrilateral elements. We then
obtain compatible finite element methods by choosing a sequence of
three finite element spaces, ($V_0$, $V_1$, $V_2$), such that:
\begin{enumerate}
\item $V_0$ contains scalar-valued, continuous functions, satisfying
  periodic boundary conditions.
\item $V_1$ contains vector-valued functions $\MM{u}$, satisfying
  periodic boundary conditions, with $\MM{u}\cdot\MM{n}$ continuous
  across element edges, where $\MM{n}$ is the normal to the boundary
  between two neighbouring elements. The tangential component need not
  be continuous. This is sufficient continuity for $\nabla\cdot\MM{u}$
  to be defined globally.
\item $V_2$ contains scalar-valued functions that may be discontinuous
  across element boundaries.
\item If $\psi\in V_0$ then $\nabla^\perp\psi=(-\psi_y,\psi_x)\in
  V_1$.
\item If $\MM{u}\in V_1$ with $\nabla\cdot\MM{u}=0$ and $\int_\Omega
  \MM{u}\diff x = 0$, then there exists some $\psi\in V_0$ with
  $\MM{u}=\nabla^\perp\psi$.
\item If $\MM{u}\in V_1$ then $\nabla\cdot\MM{u}\in V_2$.
\item If $D \in V_2$ with $\int_\Omega D\diff x=0$ then there exists
  some $\MM{u}\in V_1$ with $\nabla\cdot\MM{u}=D$.
\end{enumerate}
We represent this structure in the following diagram.
  \begin{equation*}
  \begin{CD}
    \underbrace{{V}_0}_{\mbox{\tiny Continuous}}@>\nabla^\perp >>
    \underbrace{{V}_1}_{\mbox{\tiny Continuous normal components}}
    @>\nabla\cdot>> \underbrace{{V}_2}_{\mbox{\tiny Discontinuous}}
  \end{CD}
\end{equation*}
One example of compatible finite element spaces is $V_0=\CG2$,
$V_1=\BDM1$, $V_2=\DG0$ on triangles. The finite element space $\BDM1$
contains vector-valued functions that have linear $x$- and
$y$-components in each element, with continuous normal components
across element boundaries. These finite element spaces are illustrated
in Figure \ref{fig:BDM1_DOFs}. If $\psi\in \CG1$, then $\psi$ is
continuous and the gradient can be uniformly evaluated. Since $\psi$
is continuous, the value of $\psi$ is the same along both sides of
each boundary between two elements. This means that the component of
$\nabla\psi$ tangential to the boundary is continuous. The component
of $\nabla\psi$ in the direction normal to the boundary may
jump. Transforming $\nabla\psi$ to $\nabla^\perp\psi$ rotates the
vector by 90 degrees, and so $\nabla^\perp\psi$ has continuous normal
component but possibly discontinuous tangential
component. Furthermore, if $\psi$ is a quadratic polynomial within
each element, then $\nabla\psi$ is linear. Hence, we conclude that
$\nabla^\perp\psi\in \BDM1$, confirming property 4 above. Similar
reasoning by this type of inspection confirms the remaining properties
5-7.
\begin{figure}
\centerline{\includegraphics[width=10cm]{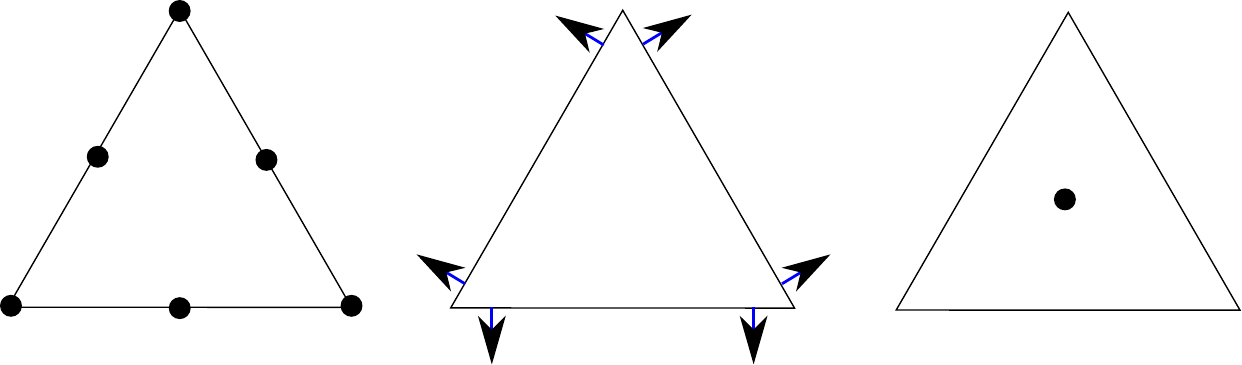}}
\vspace{3mm}
\centerline{\includegraphics[width=14cm]{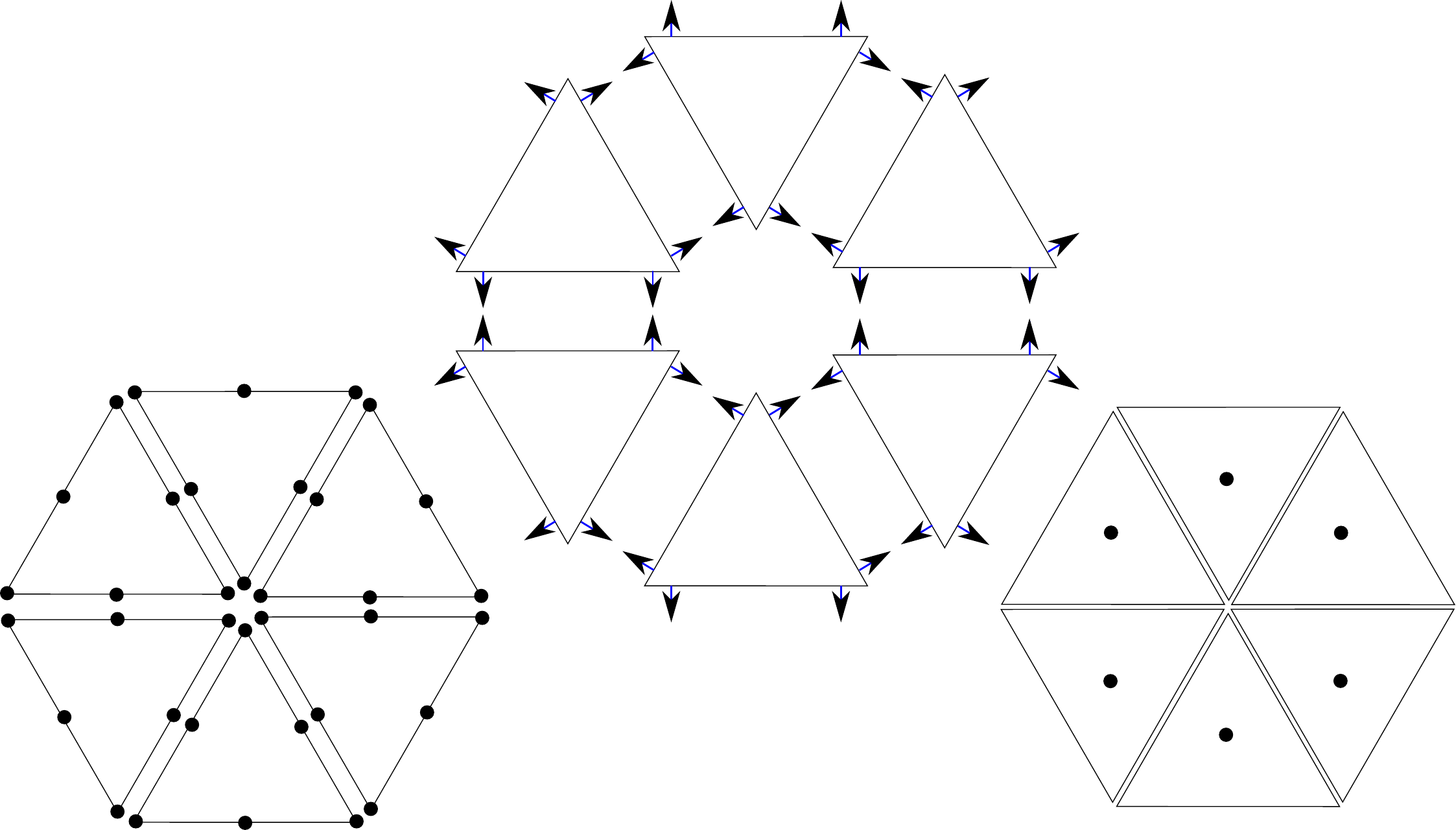}}
\caption{\label{fig:BDM1_DOFs}{\bfseries Top:} Diagrams illustrating
  $V_0=\CG2$ (left), $V_1=\BDM1$ (middle), $V_2=\DG0$ (right), on
  triangles.  These diagrams show the node points, depicted as
  circular dots, for a nodal basis for each finite element space on
  one triangle. Each basis function in the nodal basis is equal to 1
  on one node point, and 0 on all other node points, and vary
  continuously in between (since the basis functions are polynomials
  within one element). There is one basis function per node point, and
  the basis coefficient corresponding to that basis function is the
  value of the finite element function at that node.  In the case of
  $V_1$, the basis functions are vector-valued, and each node point
  has a unit vector associated with it, indicated by an arrow. Each
  basis function dotted with the unit vector is equal to 1 on one node
  point, and 0 at all the other nodes. The basis coefficient
  corresponding to that basis function is the value of the finite
  element function dotted with the unit vector at that
  node. {\bfseries Bottom:} Diagrams illustrating the continuity of
  functions between several neighbouring elements. Shared node points
  on adjoining elements mean that a single basis coefficient is used
  for these nodes. This leads to full continuity between elements for
  $\CG2$ functions, continuity in the normal components between
  elements for $\BDM1$ (since only the normal components of vectors
  are shared), and no implied continuity for $\DG0$.}
\end{figure}

There is a whole range of compatible finite element spaces that
satisfy properties 1-7. A particular choice generally depends on: (i)
what shape of elements we want, (ii) what relative size of the
dimensions of $V_0$, $V_1$, and $V_2$ we want, and (iii) what order of
accuracy we want (which depends on the degree of the polynomials
used). In \cite{cotter2012mixed}, it was shown that
$\dim(V_1)=2\dim(V_2)$ (in the case of periodic boundary conditions)
is a desirable property. If $\dim(V_1)<2\dim(V_2)$ then there are
spurious inertia-gravity waves which are known to cause serious
problems in simulating balanced flow \cite{Da2010}. If
$\dim(V_1)>2\dim(V_2)$ then there are spurious Rossby waves, the
nature of which is somewhat less clear. On triangles,
\cite{cotter2012mixed} identified the following choice as satisfying
$\dim(V_1)=2\dim(V_2)$: $V_0=\CG2+\B3$ (continuous quadratic functions
plus a cubic ``bubble'' function that vanishes on element boundaries),
$V_1=\BDFM1$ (quadratic vector-valued functions that are constrained
to have linear normal components along each triangle edge, with
continuous normal components), and $V_2=\DG1$. This choice is
illustrated in Figure \ref{BDFM1_DOFs}.
\begin{figure}
\centerline{\includegraphics[width=10cm]{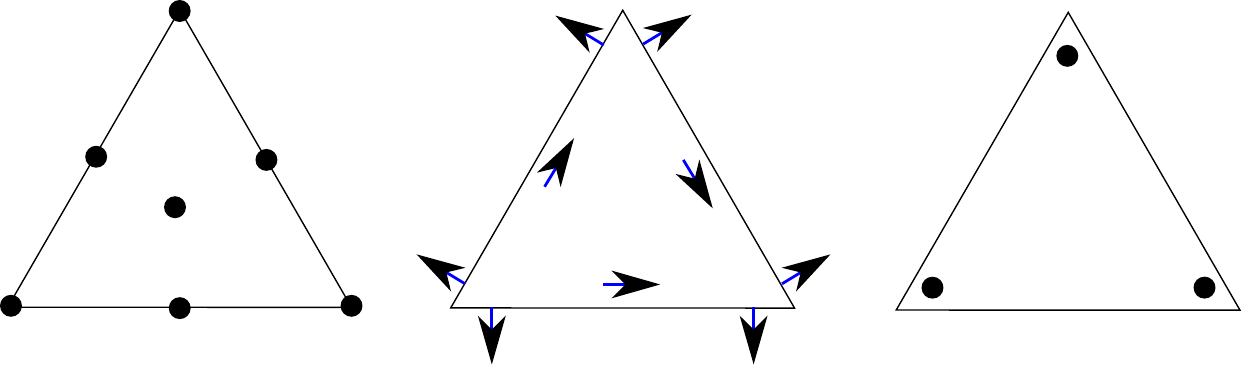}}
\caption{\label{BDFM1_DOFs}Diagrams illustrating $\CG2+\B3$ (left),
  $\BDFM1$ (middle), $\DG1$ (right), on triangles. $\CG2+\B3$ is an
  enrichment of $\CG2$, illustrated in Figure \ref{fig:BDM1_DOFs},
  adding the cubic function which vanishes on the boundary of the
  triangle. $\BDFM1$ is an enrichment of $\BDM1$ to include quadratic
  vector-valued functions that have vanishing normal components on the
  boundary of the triangle (there are three of these). The extra
  degrees of freedom are tangential components on the edge
  centres. Since tangential components are not required to be
  continuous, these values are not shared by neighbouring elements and
  there will be one tangential component node on each side of each
  edge. $\BDFM1$ has 9 nodes in each element, but 6 of these are
  shared with other elements, so $\dim(\BDFM1)=6N_e$ on the periodic
  plane. $\DG1$ has 3 nodes in each element, none of which are shared,
  so $\dim(\DG1)=3N_e$, and hence $\dim(\BDFM1)=2\dim(\DG1)$ as
  required. }
\end{figure}
On quadrilaterals, \cite{cotter2012mixed} identified the same property
in the choice $(V_0,V_1,V_2)=(\CG1,\RT0,\DG0)$, and the higher-order
extensions $(\CG(p),\RT(p-1),\DG(p-1))$. The first two sets of spaces
in this family are defined and illustrated in Figure \ref{RTQDOFS}.
\begin{figure}
\centerline{\includegraphics[width=10cm]{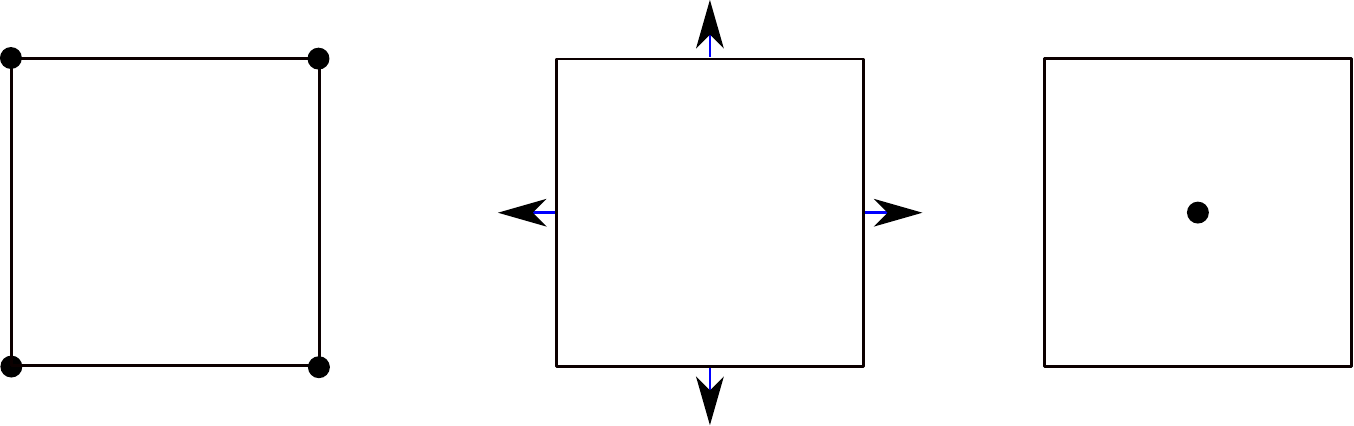}}
\centerline{\includegraphics[width=10cm]{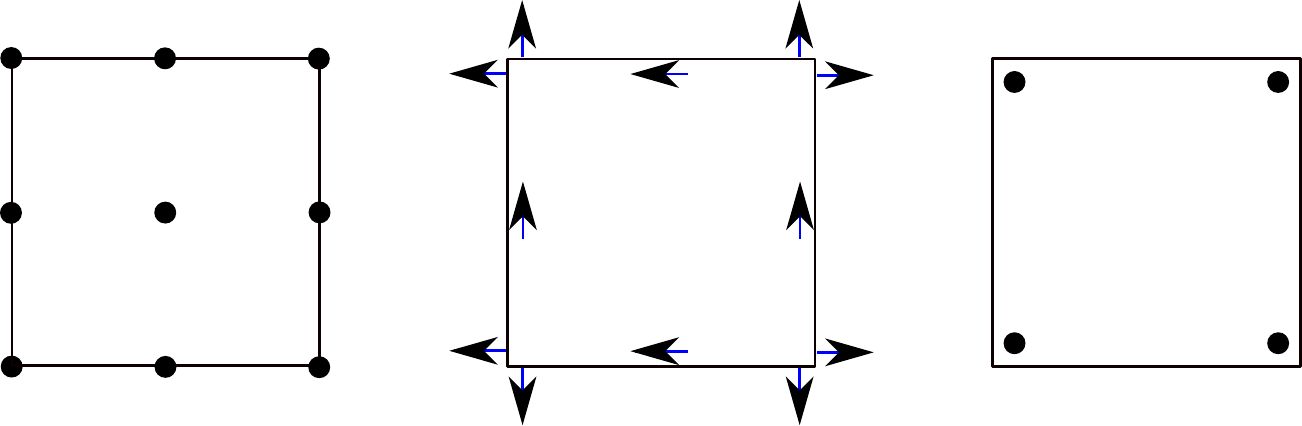}}
\caption{\label{RTQDOFS}{\bfseries Top}: Diagram illustrating
  $V_0=\CG1$, $V_1=\RT0$, $V_2=\DG0$, on quadrilaterals. On squares,
  $\CG1$ means bilinear, \emph{i.e.} linear function of $x$ multiplied
  by linear function of $y$. $\RT0$ functions are vector-valued, with
  the $x$-component being linear in $x$ and constant in $y$, and the
  $y$-component being constant in $y$ and linear in $x$. {\bfseries
    Bottom}: Diagram illustrating $V_0=\CG2$, $V_1=\RT1$, $V_2=\DG1$,
  where $\CG2$ are biquadratic functions (product of quadratic
  function of $x$ and quadratic function of $y$), $\RT1$ have
  $x$-component quadratic in $x$ and linear in $y$, and $y$-component
  linear in $x$ and quadratic in $y$.}
\end{figure}

In this discussion and the accompanying figures, we have defined the
finite element spaces on equilateral triangles or on squares. To
define them on nonsymmetric triangles/squares, including curved
elements to approximate the surface of a sphere, we apply geometric
transformations to the basis functions defined in the regular case.
In the case of $V_1$, special transformations (known as Piola
transformations) are required that preserve the normal components of
the vector-valued function on element boundaries and, in the case of
curved elements in three dimensions, keeps the vectors tangential to
the surface element. For details of these transformations, together
with their efficient implementation, see \cite{RoKiLo2009,Ro+2013}.

We now use these finite element spaces to the linear rotating shallow
water equations on the $f$-plane, explaining along the way why we
consider the compatible finite element method to be an extension of 
the C-grid finite difference method. The model equations are
\begin{align}
\label{u eqn}
\MM{u}_t + f\MM{u}^\perp + g\nabla h &=0, \\
\label{h eqn}
h_t + H\nabla\cdot\MM{u} &=0,
\end{align}
where $\MM{u}$ is the horizontal velocity, $h$ is the layer depth, $f$
is the (constant) Coriolis parameter, $g$ is the acceleration due to
gravity, $H$ is the (constant) mean layer depth, and
$\MM{u}^\perp=(-u_2,u_1)$.

For our finite element approximation, we choose $\MM{u}\in V_1$ and $h
\in V_2$. Extending the methodology of the previous section
(\emph{i.e.}  multiplying Equation \eqref{u eqn} by a test function
$\MM{w}\in V_1$, integrating the pressure gradient term by parts (the
boundary term vanishes due to the periodic boundary conditions),
multiplying Equation \eqref{h eqn} by a test function $\phi\in V_2$,
and integrating both equations over the domain $\Omega$), we obtain
\begin{align}
\label{discrete u eqn}
\int_\Omega\MM{w}\cdot\MM{u}_t\diff x + \int_\Omega f\MM{w}\cdot\MM{u}^\perp\diff x
 - \int_\Omega g\nabla\cdot\MM{w}\nabla h \diff x &=0, \quad \forall 
\MM{w}\in V_1, 
\\
\label{discrete h eqn}
\int_{\Omega} \phi\left(h_t + H\nabla\cdot\MM{u}\right)\diff x &=0,
\forall \phi \in V_2.
\end{align}
Since $h_t+H\nabla\cdot\MM{u}\in V_2$, the projection of Equation
\eqref{h eqn} is trivial, \emph{i.e.} Equation \eqref{h eqn} is
satisfied exactly under this discretisation. This means that we only
need to scrutinise the discretisation of the Coriolis and the pressure
gradient terms. In the previous section, we discussed the inf-sup condition
for one dimensional compatible finite element methods. The equivalent
condition in the two dimensional (and in fact, three dimensional) case
is 
\begin{equation}
\sup_{\MM{w}\in V_1}\frac{\left|\int_{\Omega} \nabla\cdot\MM{w} h \diff x\right|}
{\|\nabla\cdot\MM{w}\|_{L^2} }
\geq
C\|h\|_{L^2},
\end{equation}
for all non-constant $h\in V_2$. In the compatible finite element
case we again obtain $C=1$, and can provably avoid spurious pressure
modes.

Regarding the Coriolis term, the crucial condition for large scale
balanced flow (\emph{i.e.}, large scale numerical weather prediction)
is that the numerical discretisation supports exact geostrophic
balance. In the model equations (\ref{u eqn}-\ref{h eqn}), if
$\nabla\cdot\MM{u}=0$ and $\int_\Omega\MM{u}\diff x=0$, then \eqref{h
  eqn} implies that $h_t=0$. We have $\MM{u}=\nabla^\perp\psi$ for
some streamfunction $\psi$. If we choose $gh=f\psi$ then
\begin{equation}
\MM{u}_t = -f\MM{u}^\perp - g\nabla h = \nabla (f\psi - g h) =0,
\end{equation}
and we have a steady state, which we call \emph{geostrophic balance}.
If we allow $f$ to vary with $y$, leading to Rossby waves, or
introduce nonlinear terms, then this state of geostrophic balance
starts to evolve on a slow timescale relative to the rapidly
oscillating gravity waves. In large scale flow, the weather system
stays close to this balanced state. If we wish to predict the long
time evolution of this state accurately, it is essential that the
numerical discretisation exactly reproduces steady geostrophic states,
otherwise the errors in representing this balance will lead to
spurious motions that are much larger than the slow evolution when
Rossby waves or nonlinear evolution are introduced, and the forecast
will be useless. The C-grid staggering exactly reproduces steady
geostrophic states, if the Coriolis term is represented correctly (see
\cite{ThRiSkKl2009,ThCo2012} for how to do this on very general
grids); this accounts for the popularity and success of the C-grid in
numerical weather prediction. 

We now demonstrate that compatible finite element methods also have
steady geostrophic states under the same conditions (this was first
shown in \cite{cotter2012mixed}). If $\nabla\cdot\MM{u}=0$ and
$\int_\Omega\MM{u}\diff x = 0$ for $\MM{u}\in V_1$, then
$h_t=-H\nabla\cdot\MM{u}=0$. Next, we can find $\psi\in V_0$ with
$\MM{u}=\nabla^\perp\psi$. Then we solve for $h\in V_2$ from the
equation
\begin{equation}
\label{eq:project psi}
\int_\Omega \phi g h \diff x = \int_\Omega \phi f \psi \diff x, \quad
\forall \phi \in V_2,
\end{equation}
\emph{i.e.}, $gh$ is the projection of $f\psi$ into $V_2$. Then
\begin{align*}
\int_\Omega \MM{w}\cdot\MM{u}_t\diff x & = -\int_\Omega f\MM{w}\cdot\MM{u}^\perp
\diff x + \int_\Omega \nabla\cdot\MM{w}gh\diff x, \\
 & = \int_\Omega f\MM{w}\cdot\nabla\psi
\diff x + \int_\Omega \nabla\cdot\MM{w}gh\diff x, \\
 & = -\int_\Omega \nabla\cdot\MM{w}f\psi
\diff x + \int_\Omega \nabla\cdot\MM{w}gh\diff x, \\
& = 0,
\end{align*}
where we may integrate by parts in the third line since $\MM{w}$ has
continuous normal component and $\psi$ is continuous which means that
the integration by parts is exact, and where we note in the fourth
line that $\nabla\cdot\MM{w}\in V_2$, and so we may use Equation
\eqref{eq:project psi} with $\phi=\nabla\cdot\MM{w}$, meaning that we
obtain 0 in the final line. Hence, we have an exact steady state.

The extension to the nonlinear shallow water equations makes use 
of the \emph{vector invariant form},
\begin{align}
\label{u eqn nonlinear}
\MM{u}_t + qh\MM{u}^\perp + \nabla \left(gh + \frac{1}{2}|\MM{u}|^2\right) &=0,\\
h_t + \nabla\cdot(h\MM{u}) &= 0, 
\label{h eqn nonlinear}
\end{align}
where $q$ is the shallow water potential vorticity
\begin{equation}
\label{eq:q}
q= \frac{\nabla^\perp\cdot\MM{u}+f}{h}, \quad \nabla^\perp\cdot\MM{u}
=-\pp{u_1}{y}+\pp{u_2}{x},
\end{equation}
and we have used the split $(\MM{u}\cdot\nabla)\MM{u}=q\MM{u}^\perp +
\nabla(|\MM{u}|^2/2)$. If we apply $\nabla^\perp$ to Equation
\eqref{u eqn nonlinear} and substitute Equation \eqref{eq:q} we 
obtain 
\begin{equation}
\label{q conservation}
(qh)_t + \nabla\cdot(qh\MM{u})=0,
\end{equation}
which is the conservation law for $q$. This takes an important role in
predicting the large scale balanced flow.

Three issues need to be addressed when discretising these equations
with compatible finite element methods. First, if $\MM{u}\in V_1$ and
$h \in V_2$, then $gh + |\MM{u}|^2/2$ has discontinuities and the
gradient is not globally defined. Second, although we can evaluate
$\nabla\cdot\MM{u}$ globally, we cannot evaluate
$\nabla\cdot(h\MM{u})$ globally, as $h$ is discontinuous. Third, we
need a way of calculating $q$. The first issue is addressed by
using integration by parts, as in the linear case. The second issue
can be addressed by projecting $\MM{u}h$ into $V_1$, \emph{i.e.}
solving for $\MM{F}\in V_1$ such that
\begin{equation}
\label{eq:F discrete}
\int_\Omega \MM{w}\cdot\MM{F}\diff x = \int_\Omega \MM{w}\cdot
h\MM{u}\diff x, \quad \forall \MM{w} \in V_1.
\end{equation}
To calculate $q$, we need to integrate the $\nabla^\perp\cdot$
operator on $\MM{u}$ by parts, since $\MM{u}\in V_1$ has insufficient
continuity for $\nabla^\perp\cdot\MM{u}$ to be globally defined. We
choose $q\in V_0$, and multiply Equation \eqref{eq:q} by $h$, then
$\gamma\in V_0$, then finally integrate, to obtain
\begin{equation}
\label{eq:q discrete}
\int_\Omega \gamma q h \diff x = -\int_\Omega \nabla^\perp \cdot \gamma
\MM{u}\diff x + \int_\Omega \gamma f \diff x, \quad
\forall \gamma \in V_0.
\end{equation}
If $h$ is known, then this equation can be solved for $q\in V_0$ (the
factor of $h$ just reweights the integral in each element).

Having addressed these three issues, we can write down the 
compatible finite element discretisation of the nonlinear 
shallow water equations:
\begin{align}
\label{eq:u nonlinear discrete}
\int_\Omega \MM{w}\cdot\MM{u}_t \diff x + \int_\Omega \MM{w}\cdot
q \MM{F}^\perp\diff x + \int_\Omega \nabla\cdot\MM{w}
\left(gh + \frac{1}{2}|\MM{u}|^2\right)\diff x &= 0, \\
\int_\Omega \phi \left(h_t + \nabla\cdot\MM{F}\right)\diff x &=0, 
\label{eq:h nonlinear discrete}
\end{align}
where $\MM{F}$ and $q$ are defined from Equations \eqref{eq:F
  discrete} and \eqref{eq:q discrete} respectively. There are a number
of things to observe about these equations. Firstly, the following quantities
are conserved:
\begin{align}
\mbox{Mass:}\qquad & \int_\Omega h \diff x, \\
\mbox{Energy:}\qquad & \int_\Omega \frac{h}{2}\left(|\MM{u}|^2 + gh\right)\diff x, \\
\mbox{Total vorticity:}\qquad & \int_\Omega qh \diff x, \\
\mbox{Enstrophy:}\qquad & \int_\Omega q^2 h \diff x.
\end{align}
To see that mass is conserved, just take $\phi=1$ in Equation
\eqref{eq:h nonlinear discrete}, and the divergence integrates to 0 by
the Divergence Theorem. To see that the total vorticity is conserved,
take $\gamma=1$ in Equation \eqref{eq:q discrete}. The $\MM{u}$ term
vanishes since $\nabla^\perp\gamma=0$, and $f$ is independent of time.
Similar direct computations lead to conservation of energy and
enstrophy; these make use of the integral formulation and the
compatibility properties and are presented in \cite{McRaCo2014}
together with numerical verifications of the conservation properties.

It is also interesting to ask what equation $q$ satisfies in the
discrete setting. Our prognostic variables are $u$ and $h$, with $q$
being purely diagnostic, so we have to make use of the $u$ and $h$
equations to obtain the dynamical equation for $q$. To do this,
we apply a time derivative to Equation \eqref{eq:q discrete}, and obtain
\begin{equation}
\int_\Omega \gamma (qh)_t \diff x + \int_\Omega \nabla^\perp\gamma
\cdot \MM{u}_t\diff x  = 0, \quad \forall \gamma \in V_0.
\end{equation}
Since $\nabla^\perp\gamma \in V_1$, we can substitute $\MM{w}=\nabla^\perp
\gamma$ in Equation \eqref{eq:u nonlinear discrete}, and we get
\begin{align}
\int_\Omega \nabla^\perp\gamma
\cdot \MM{u}_t\diff x &= -\int_\Omega \nabla^\perp\gamma\cdot q\MM{F}^\perp\diff x
-\int_\Omega \underbrace{\nabla\cdot\nabla^\perp\gamma}_{=0}\left(|\MM{u}|^2 + gh\right)\diff x, \\
&= -\int_\Omega \nabla\gamma\cdot q\MM{F}\diff x,
\end{align}
and hence
\begin{equation}
\int_\Omega \gamma (qh)_t \diff x - \int_\Omega \nabla\gamma
\cdot \MM{F}q\diff x  = 0, \quad \forall \gamma \in V_0.
\end{equation}
Finally, since $\MM{F}$ has continuous normal components and
$\gamma$ is continuous, we may integrate by parts without 
changing the finite element discretisation, and we obtain
\begin{equation}
\int_\Omega \gamma \left((qh)_t + \nabla
\cdot \MM{F}q\right)\diff x  = 0, \quad \forall \gamma \in V_0.
\end{equation}
This is the projection of Equation \eqref{q conservation} into $V_0$,
and so the discretisation has a consistent potential vorticity
conservation law.

It should be noted that for shallow water equations in the geostrophic
limit, it is desirable to dissipate enstrophy at the grid scale rather
than conserve it exactly, due to the enstrophy cascade to small scales
which would otherwise cause gridscale oscillations. In
\cite{McRaCo2014}, \eqref{eq:u nonlinear discrete} was modified to
dissipate enstrophy at the gridscale whilst conserving energy; this
follows the Anticipated Potential Vorticity Method strategy of
\cite{ArHs1990}. It was shown in \cite{McRaCo2014} that this
modification leads to stable vortex merger solutions that do not
develop gridscale oscillations. Further, it is desirable to replace
APVM by stable, accurate upwind advection schemes for $q$ and $h$; we
are developing the integration of discontinuous Galerkin methods for
$h$ and higher-order Taylor-Galerkin methods for $q$ in current work.

Finally, we present some results integrating the shallow water
equations on the sphere using Test Case 5 (the mountain test case) as
specified in \cite{Wi1992}. Figure \ref{fig:w5_conv} gives a
convergence plot upon comparing the height field with the solution
from a resolved pseudospectral calculation, using the BDFM1 space with
a successively refined icosahedral mesh. The expected 2nd order
convergence is obtained. Figure \ref{fig:day_15} is an image of the
velocity and height fields at day 15, while Figure \ref{fig:pv} shows
the evolution of the potential vorticity field out to 50 days.

\begin{figure}
  \centering
  \includegraphics[width=0.8\textwidth]{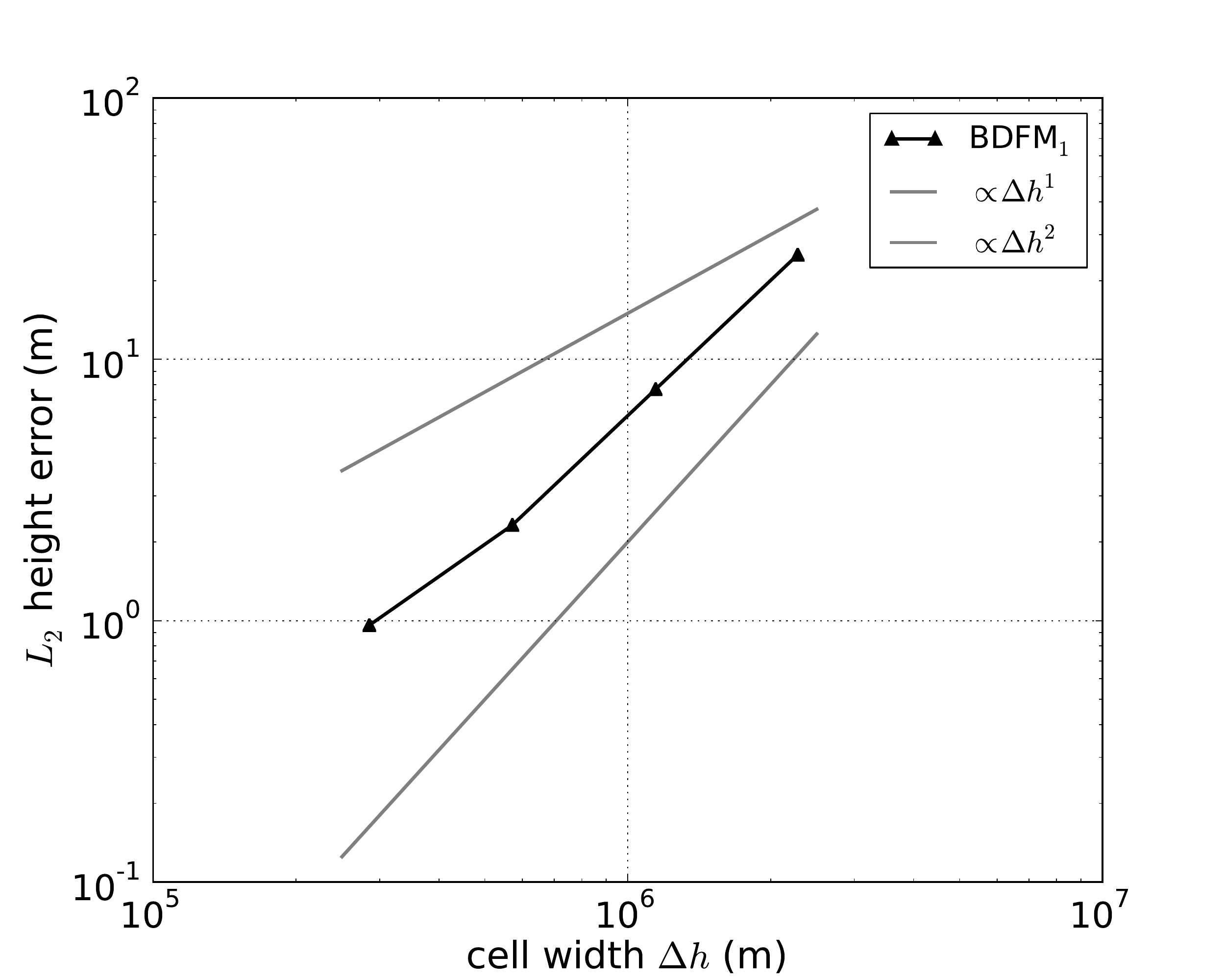}
  \caption{\label{fig:w5_conv} $\|h  - h_\mathrm{ref} \|$
    versus mesh size (where $h_\mathrm{ref}$ is a reference
    solution and $h$ is the numerical solution) for the Williamson 5
    test case. $\Delta t = 225s$, 4 quasi-Newton iterations per time step.}
\end{figure}

\begin{figure}
  \centering
  \includegraphics[width=0.45\textwidth]{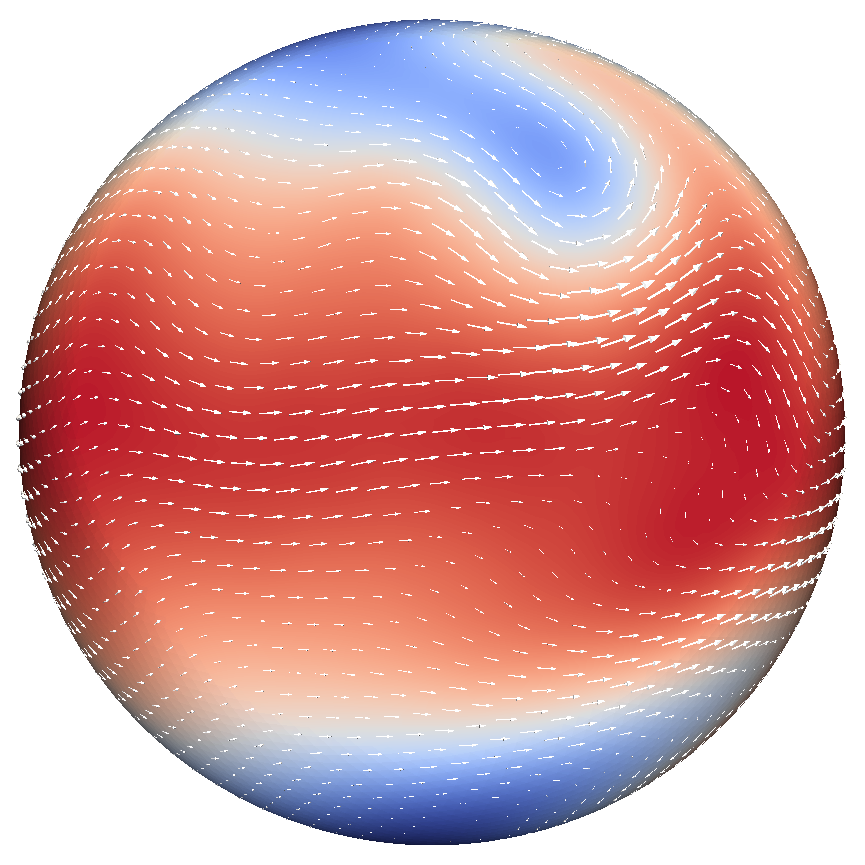}
  \includegraphics[width=0.45\textwidth]{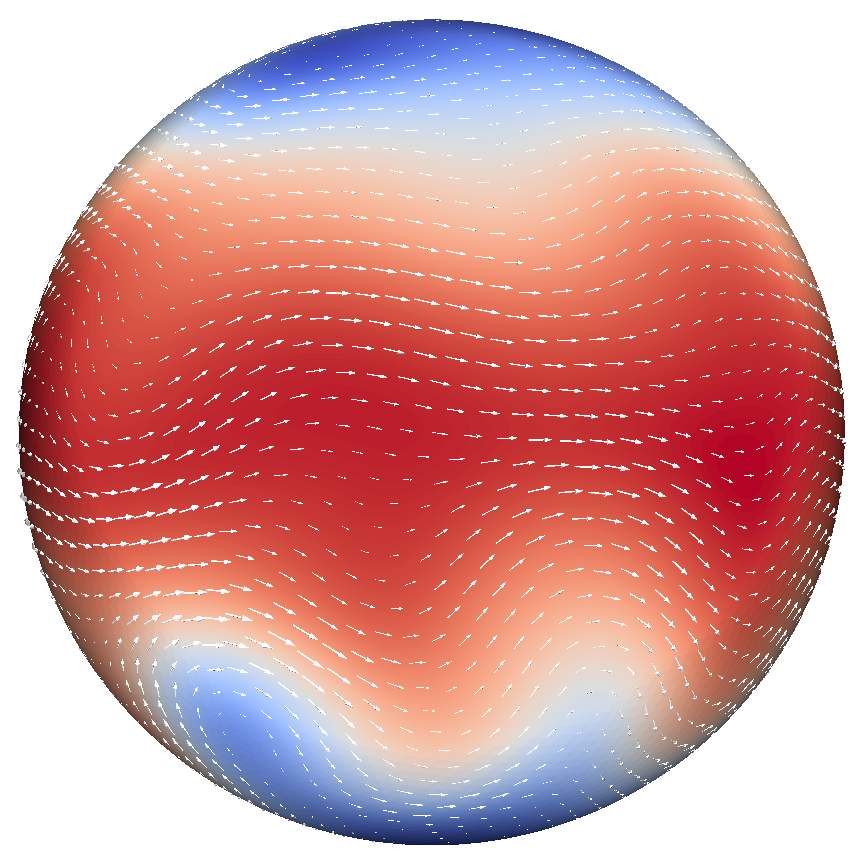}

  \caption{\label{fig:day_15} Snapshots of the velocity and height fields
  in the Williamson 5 test case at 15 days. Blue represents small fluid
  depth, red represents large fluid depth. Left: facing the mountain. Right:
  reverse side.}
\end{figure}

\begin{figure}
  \centering
  \includegraphics[width=0.35\textwidth]{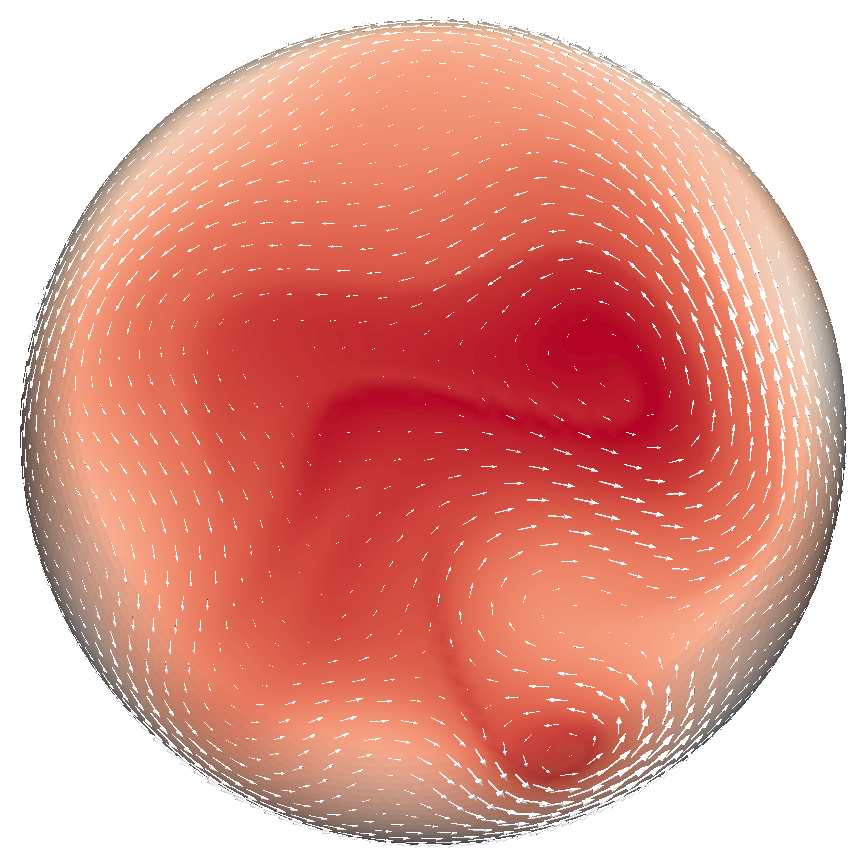}
  \includegraphics[width=0.35\textwidth]{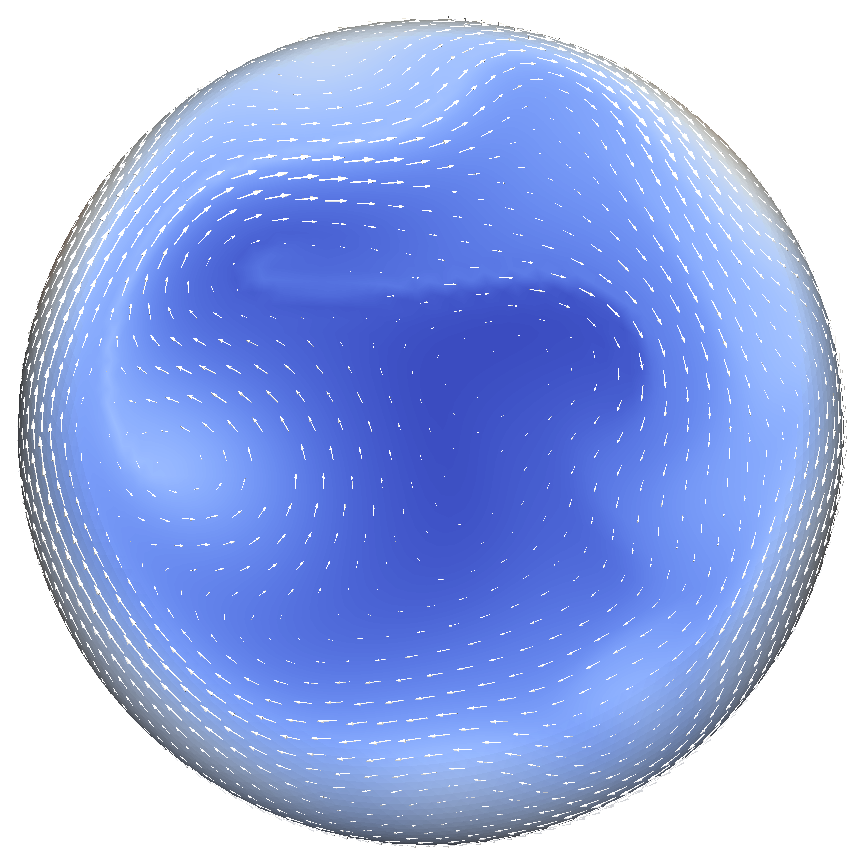}
  \\
  \includegraphics[width=0.35\textwidth]{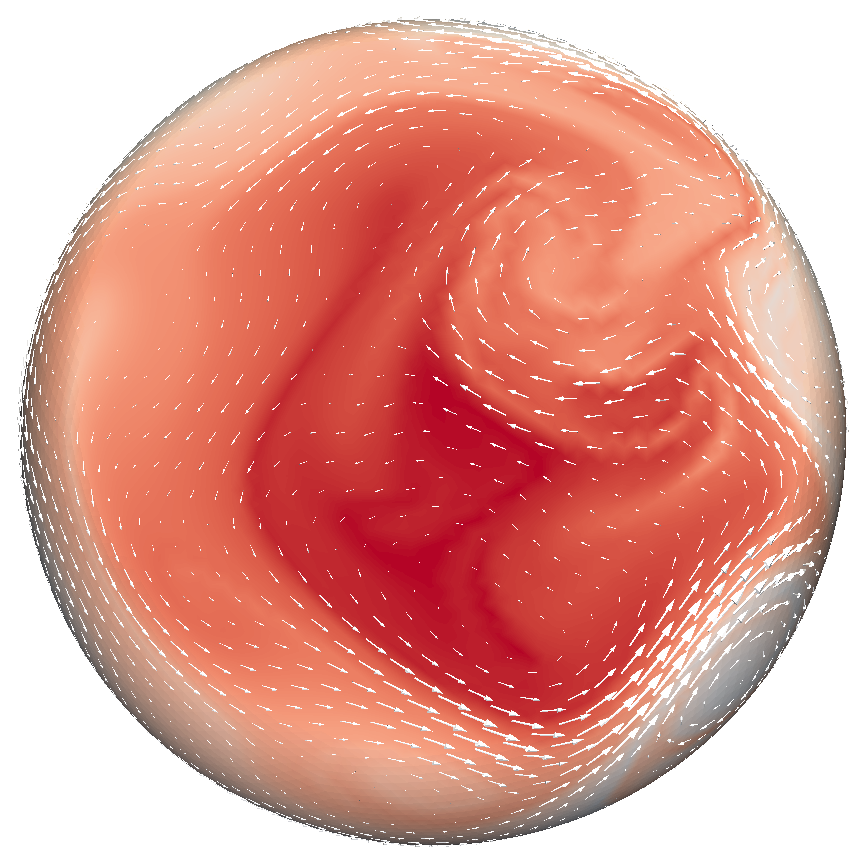}
  \includegraphics[width=0.35\textwidth]{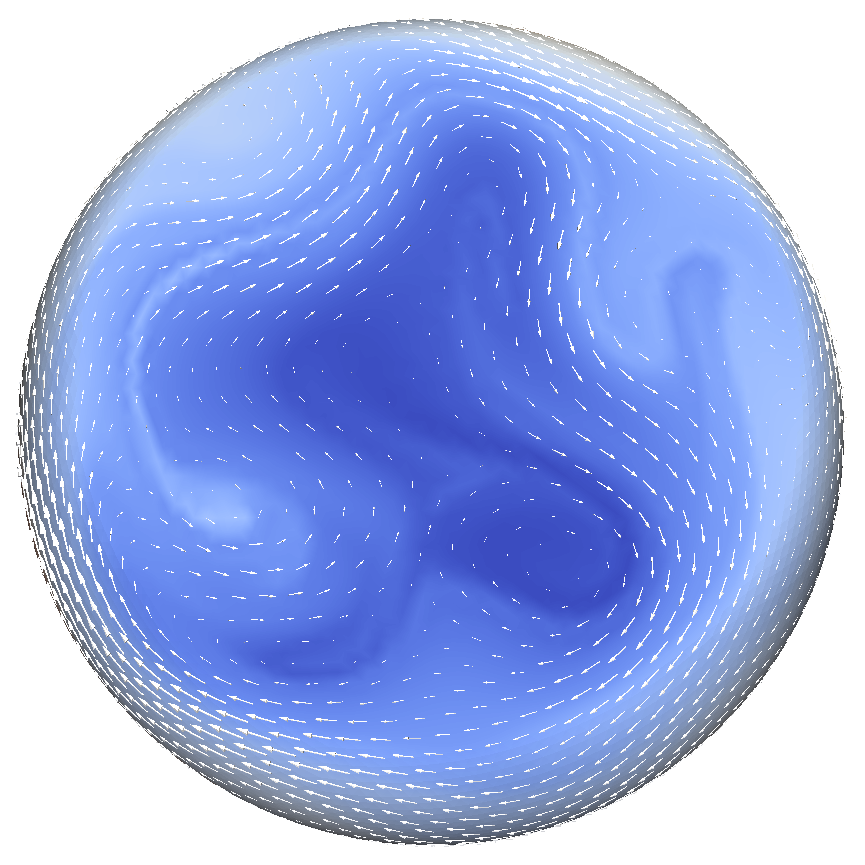}
  \\
  \includegraphics[width=0.35\textwidth]{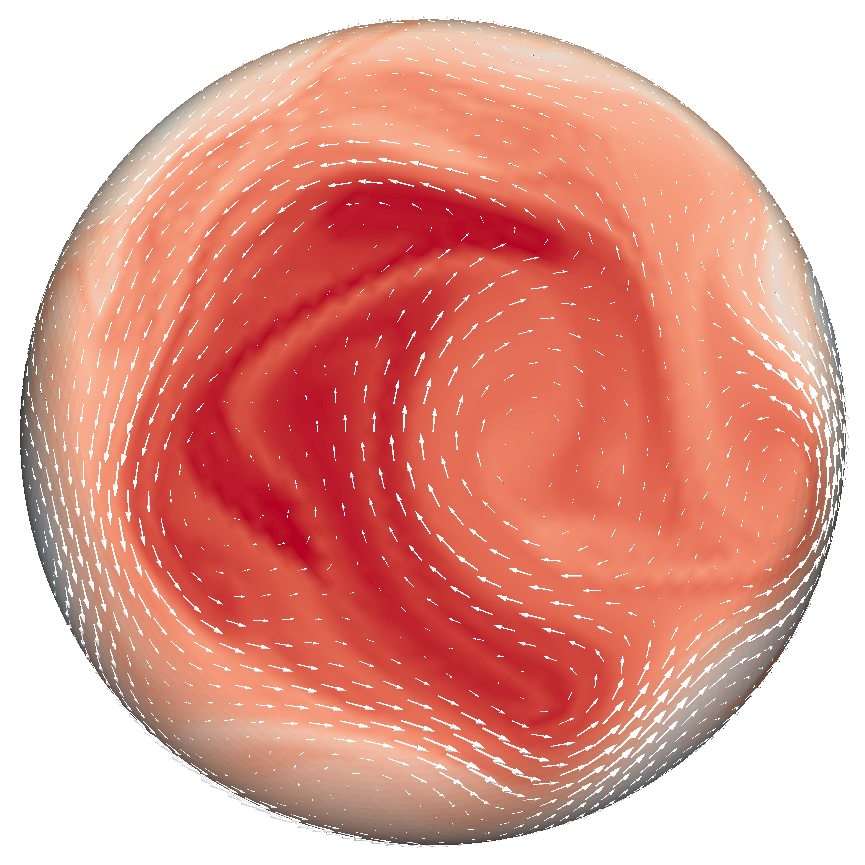}
  \includegraphics[width=0.35\textwidth]{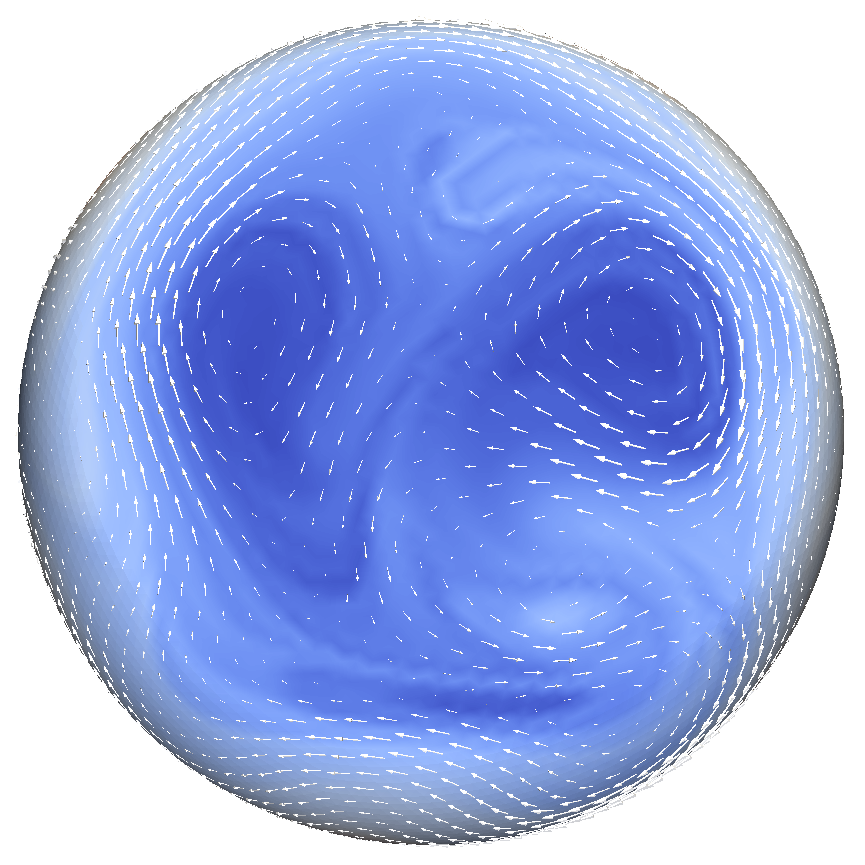}
  \\
  \includegraphics[width=0.35\textwidth]{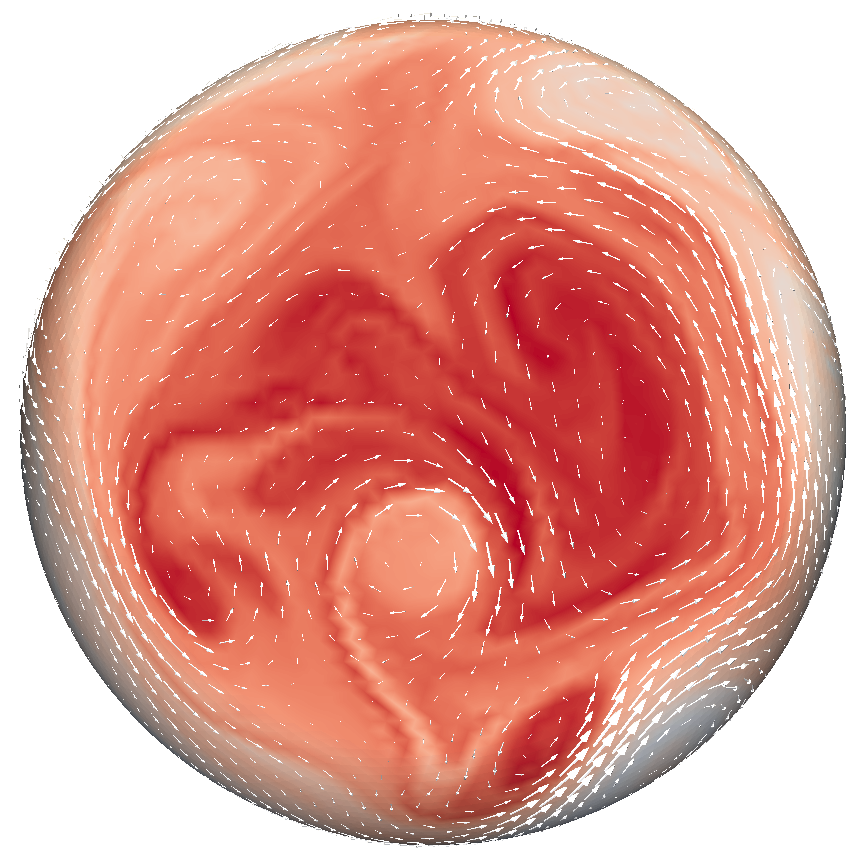}
  \includegraphics[width=0.35\textwidth]{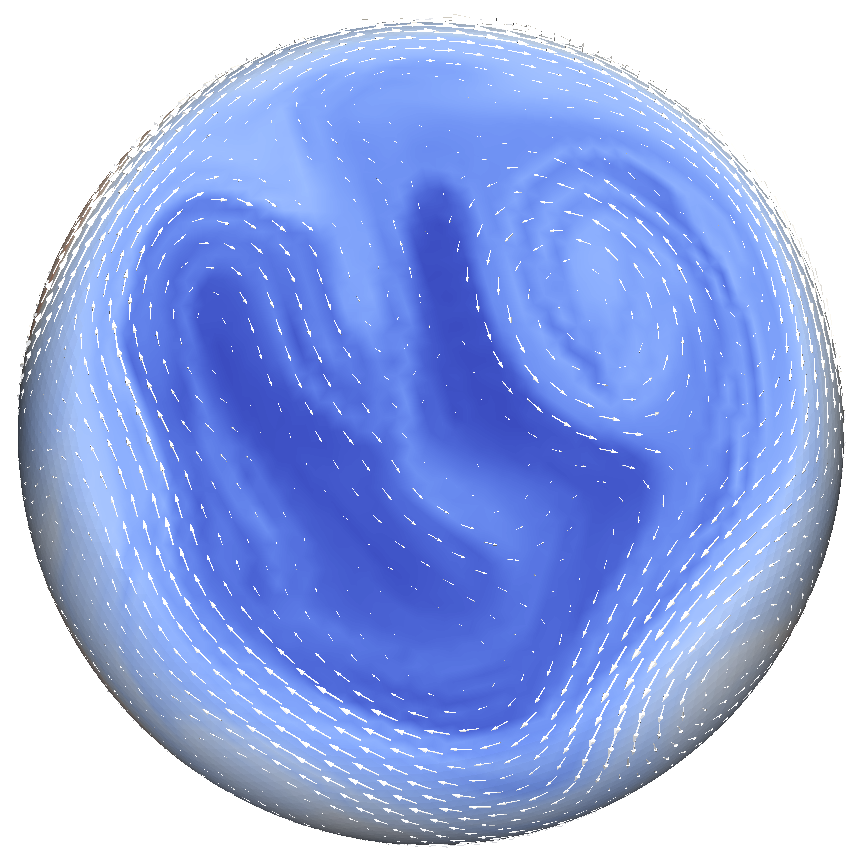}
  \caption{\label{fig:pv} Top to bottom: snapshots of the potential
    vorticity field in the Williamson 5 test case, at 20, 30, 40 and
    50 days respectively, with superposed (scaled) velocity
    vectors. Left: facing North pole. Right: facing South pole.}
\end{figure}

\section{Outlook on applications in three dimensional models}
\label{outlook}
Current work as part of the UK GungHo dynamical core project is
investigating the application of compatible finite element spaces to
three dimensional compressible flow. In three dimensions, we now
have four finite element spaces, and the required structure 
is depicted in the following diagram
  \begin{equation*}
  \begin{CD}
   \underbrace{{V}_0}_{\mbox{\tiny Continuous}}@>\nabla >>
    \underbrace{{V}_1}_{\mbox{\tiny Continuous tangential components}} @>\nabla\times >>
    \underbrace{{V}_2}_{\mbox{\tiny Continuous normal components}} @>\nabla\cdot>>
    \underbrace{{V}_3}_{\mbox{\tiny Discontinuous}}
  \end{CD}
\end{equation*}
where $V_1$ and $V_2$ are both vector-valued finite element spaces.
There is also an extension of the vector invariant form into three
dimensions. We choose $\MM{u}\in V_2$, $\rho \in V_3$, and it is
possible to define a vorticity $\MM{\omega}\in V_1$ using integration
by parts.  For a vertical discretisation similar to the Lorenz grid,
we could choose potential temperature $\theta$ to be in $V_3$, however
the extension of the Charney-Phillips grid would require $\theta$ to
be in the vertical part of $V_2$; this is the subject of current work.
Other challenges in this setting include determining the correct form
of the pressure gradient term, the treatment of the velocity advection
(which would be \emph{via} an implied vorticity equation), and the
efficient solution of the coupled linear system that is required for a
semi-implicit implementation.

\bibliography{cjc_ecmwf}
\end{document}